\documentclass[12pt]{amsart}
\usepackage{mlmodern}

\usepackage[utf8]{inputenc}
\usepackage[margin=1.2in]{geometry}
\usepackage{amsmath,amssymb}
\usepackage{amsfonts}
\usepackage{mathtools}
\usepackage{float}
\usepackage{amsthm}
\usepackage{mathrsfs}
\usepackage{graphicx}
\usepackage[colorlinks=true, linkcolor=blue, citecolor=blue, urlcolor=blue, breaklinks=true]{hyperref}
\usepackage{thmtools}
\usepackage[capitalise]{cleveref}
\usepackage{tikz}
\usepackage{soul}
\usepackage{enumitem}
\usepackage[symbol]{footmisc}

\newcommand\R{\mathbb{R}}
\newcommand\Z{\mathbb{Z}}

\newcommand\T{\mathcal{T}}
\newcommand\D{\mathcal{D}}
\renewcommand\O{\mathcal{O}}

\newtheorem{theorem}{Theorem}[section]

\newtheorem{definition}[theorem]{Definition}
\newtheorem{example}[theorem]{Example}

\newtheorem{proposition}[theorem]{Proposition}
\newtheorem{remark}[theorem]{Remark}

\title{Gamma-positivity for octopuses: a bijective proof}

\author{Krishna Menon}
\address{\scriptsize Department of Mathematics, KTH Royal Institute of Technology, Stockholm, Sweden}
\email{puzhan@kth.se}

\begin{document}

\begin{abstract}
    Chapoton introduced an interesting family of polytopes called arbor polytopes, where a polytope $\mathcal{Q}_\tau$ is associated to any arbor $\tau$. 
    Chapoton conjectured that the polynomial $h(\tau)$ which counts lattice points in $\mathcal{Q}_\tau$ by number of nonzero entries is palindromic and unimodal. 
    Athanasiadis, Xiao, and Yan recently proved that $h(\tau)$ is gamma-positive for a certain class of arbors they call octopuses. 
    Since their proof was computational, they asked for one that is bijective. 
    We present such a proof. 
    We also extend their results by showing that $h(\tau)$ is gamma-positive for a larger class of arbors we call lopsided octopuses.
\end{abstract}

\maketitle

\section{Preliminaries}

An \emph{arbor} of size $n$ is a rooted tree with vertices the blocks of a partition of $[n]$. 
For any vertex $v$ of an arbor, we use $\D(v)$ to denote the union of all the descendants of $v$, including $v$ itself. 
Given an arbor $\tau$ of size $n$, the polytope $\mathcal{Q}_\tau$ in $\R^n$ is defined by the inequalities
\begin{align*}
    x_i &\geq 0 \text{ for all }i \in [n]\text{ and }\\[0.2cm]
    \sum_{i \in \D(v)} x_i &\leq |\D(v)| \text{ for all vertices $v$ of $\tau$}.
\end{align*}
These polytopes were introduced and studied by Chapoton \cite{chap}. 
Special classes of these polytopes have been studied as well \cite{comp,oct}. 
A generalization of these polytopes (by considering preorders instead of arbors) has also been studied by Athanasiadis and Chapoton \cite{preord}.

The polynomials we study, which were introduced by Chapoton \cite{chap}, are those that count the lattice points in these polytopes by number of nonzero coordinates. 
For any arbor $\tau$, the polynomial $h(\tau)$ is defined by
\begin{equation*}
    h(\tau, t) = \sum_x t^{|\{i \in [n] \colon x_i \neq 0\}|}
\end{equation*}
where the sum is over all $x \in \mathcal{Q_\tau} \cap \Z^n$. 
These polynomials are conjectured to be palindromic and unimodal \cite[Conjecture 0.1]{chap}. 
For an arbor $\tau$ of size $n$, the polynomial $h(\tau)$ is said to be \emph{$\gamma$-positive} if there are nonnegative numbers $\gamma_j(\tau)$ for which
\begin{equation*}
    h(\tau, t) = \sum_{j = 0}^{\lfloor n/2 \rfloor} \gamma_j(\tau) t^j(1 + t)^{n - 2j}.
\end{equation*}
This property would, in particular, prove that $h(\tau)$ is palindromic and unimodal. 
See \cite{athsurv} for a survey on $\gamma$-positivity.

Athanasiadis, Xiao, and Yan \cite{oct} studied arbor polytopes for a class of arbors they call \emph{octopuses}. 
For $0 \leq k \leq n$, the octopus $\tau_{n, k}$ has the $(n - k)$-element set $[k + 1, n]$ as its root and the singletons $\{i\}$ for $i \in [k]$ as leaves of the root. 
They prove the following result.

\begin{proposition}[{\cite[Proposition 2.2]{oct}}]\label{prop}
    For any $n, k$, we have
    \begin{equation*}
        h(\tau_{n, k}, t) = \sum_{j = 0}^{\lfloor n/2 \rfloor} \binom{n - k}{j} \binom{n - j}{j} t^j(1 + t)^{n - 2j}.
    \end{equation*}
\end{proposition}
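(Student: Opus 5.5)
The plan is to first reduce $h(\tau_{n,k},t)$ to an explicit binomial sum by describing the lattice points directly. Then I will prove the identity of coefficients, keeping track throughout of a combinatorial model that could later be made bijective.

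\textbf{Step 1 (explicit count).} For the octopus $\tau_{n,k}$, the leaf inequalities read $x_i \le 1$ for $i\in[k]$, and the root inequality reads $\sum_{i=1}^n x_i \le n$. So a lattice point is a vector with $x_i\in\{0,1\}$ for $i\le k$, $x_i\in\Z_{\ge 0}$ for $i>k$, and total sum at most $n$. Fix the support: $A\subseteq[k]$ with $|A|=a$ and $B\subseteq[k+1,n]$ with $|B|=b$. The entries on $B$ are positive integers with sum at most $n-a$. By stars and bars (adding a slack variable) there are $\binom{n-a}{b}$ such choices. Hence
\begin{equation*}
h(\tau_{n,k},t)=\sum_{a,b}\binom{k}{a}\binom{n-k}{b}\binom{n-a}{b}\,t^{a+b}.
\end{equation*}

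\textbf{Step 2 (reduce to a coefficient identity).} Expand the right-hand side of \cref{prop} and use
\begin{equation*}
\binom{n-j}{j}\binom{n-2j}{s-j}=\binom{n-j}{s}\binom{s}{j}.
\end{equation*}
The claim then becomes, for each $s$,
\begin{equation*}
\sum_{a}\binom{k}{a}\binom{n-k}{s-a}\binom{n-a}{s-a}=\sum_{j}\binom{n-k}{j}\binom{s}{j}\binom{n-j}{s}.
\end{equation*}

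I will prove this by induction on $k$, with $n$ and $s$ fixed.
\begin{itemize}
\item \emph{Base case $k=0$.} The identity reads $\binom{n}{s}^2=\sum_j\binom{n}{j}\binom{s}{j}\binom{n-j}{s}$. This follows from $\binom{n}{s}\binom{n}{s}=\sum_j \binom{n}{j}\binom{n-j}{s-j}\binom{s}{s-j}$-type Vandermonde manipulations. Combinatorially: choose two $s$-subsets $S,T$ of $[n]$, let $J=T\setminus S$, and reindex.
\item \emph{Induction step $k\to k+1$.} Compare both sides using Pascal's rule $\binom{n-k}{\cdot}=\binom{n-k-1}{\cdot}+\binom{n-k-1}{\cdot-1}$ on the right. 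On the left, split according to whether the element $k+1$ lies in the leaf part. Each side then changes by a difference that should match term-by-term after the substitution $a\mapsto a+1$.
\end{itemize}

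\textbf{Step 3 (combinatorial model, toward a bijection).} The right-hand side counts triples $(J,K,S)$ with
\begin{itemize}
\item $J$ a $j$-subset of the root-leaf positions $[k+1,n]$,
\item $K$ a $j$-subset of $[n-j]$,
\item $S$ a subset of the remaining $n-2j$ "free" slots,
\end{itemize}
where each free slot contributes a factor $(1+t)$. The left-hand side counts pairs consisting of a support and a $b$-subset $C\subseteq[n-a]$ encoding the root entries.

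The natural bijective route is a $\Z_2^{n-2j}$-action on lattice points that toggles membership in the support at "free" coordinates. Such an action would partition the points into orbits of size $2^{n-2j}$, each contributing $t^j(1+t)^{n-2j}$. A coordinate $i>k$ would be "paired" (hence fixed, lying in $J$) when its position in the stars-and-bars word is matched with an element of $C$ in a way that cannot be undone. Matching $J$ with $K\subseteq[n-j]$ then accounts for the factor $\binom{n-j}{j}$.

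\textbf{Main obstacle.} The hard part is defining the pairing so that toggling a free coordinate changes $a+b$ by exactly one and keeps the total sum at most $n$. Toggling a root coordinate changes both $B$ and the ambient set $[n-a]$ in which $C$ lives, so the naive toggle is not well defined. If the toggle cannot be made to work, I will fall back on the algebraic induction of Step 2, which suffices to prove \cref{prop}. I expect the Pascal-rule bookkeeping in that induction step, in particular matching the boundary terms in $a$ and $j$, to be the only delicate computation.
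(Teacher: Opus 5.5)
Your Steps 1 and 2 are correct: the support count $\sum_{a,b}\binom{k}{a}\binom{n-k}{b}\binom{n-a}{b}t^{a+b}$ holds, the reduction to a coefficient identity is valid, and your $J=T\setminus S$ argument proves the base case $k=0$. (The displayed formula you give for the base case is wrong as written, since its right side equals $\binom{n}{s}\binom{2s}{s}$, but that slip does not matter.) The gap is the induction step, which carries all the content, and the mechanism you assert there fails. Write $L_k$ and $R_k$ for the two sides of your identity with $n$ and $s$ fixed. Pascal's rule gives
\begin{equation*}
L_k-L_{k+1}=\sum_a\binom{k}{a}\binom{n-k-1}{s-a-1}\binom{n-a-1}{s-a},\qquad R_k-R_{k+1}=\sum_j\binom{n-k-1}{j-1}\binom{s}{j}\binom{n-j}{s}.
\end{equation*}
These agree only as totals, not term by term under $j=a+1$. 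For $n=6$, $k=2$, $s=2$ the terms are $30+8$ versus $20+18$. Even the first step $k=0\to 1$ needs $\binom{n-1}{s-1}\binom{n-1}{s}=\sum_j\binom{n-1}{j-1}\binom{s}{j}\binom{n-j}{s}$, which is not a termwise match. So the induction only trades your identity for another three-binomial identity of the same shape, and that identity is left unproved. Step 3, as you say, is not yet a construction. As it stands, nothing in the proposal proves the proposition beyond $k=0$.

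Either of two ideas closes the gap. The algebraic route needs no induction. Expand $\binom{n-j}{s}=\sum_a\binom{k}{a}\binom{n-k-j}{s-a}$ in $R_k$ and $\binom{n-a}{s-a}=\sum_i\binom{n-k}{i}\binom{k-a}{s-a-i}$ in $L_k$. Then apply $\binom{m}{p}\binom{m-p}{q}=\binom{m}{q}\binom{m-q}{p}$ and Vandermonde once more. Both sides become $\sum_a\binom{k}{a}\binom{n-k}{s-a}\binom{n-k+a}{s}$.

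The bijective route is the paper's, and it removes exactly the obstacle you identified. Transport the $b$-subset $C\subseteq[n-a]$ that records the partial sums of the root entries, via the order-preserving map, to a subset of $[n]\setminus A$; these are the paper's dots not circled black. Lattice points then become triples $(A,B,C)$ with $A\subseteq[k]$, $B\subseteq[k+1,n]$, $C\subseteq[n]\setminus A$ and $|B|=|C|$, weighted by $t^{|A|+|B|}$. Now the toggles are well defined:
\begin{itemize}
\item toggle membership in $A$ of any $i\in[k]\setminus C$;
\item toggle simultaneously in $B$ and $C$ any $i\in[k+1,n]$ lying in both or in neither.
\end{itemize}
The orbit representatives are the triples with $A=\varnothing$ and $B\cap C=\varnothing$. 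There are $\binom{n-k}{j}\binom{n-j}{j}$ of them with $|B|=j$. Each has exactly $n-2j$ free positions $[n]\setminus(B\cup C)$, so each orbit contributes $t^j(1+t)^{n-2j}$.
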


Hence, Athanasiadis, Xiao, and Yan prove that the polynomials $h(\tau_{n, k})$ are $\gamma$-positive, and explicitly compute their coefficients in the $\gamma$-basis.
However, their proof is rather computational. 
The first result in this article, presented in \Cref{proofsec}, is a bijective proof of the above proposition.

We extend this result to a more general class of arbors that we call \emph{lopsided octopuses}. 
A lopsided octopus is an octopus where we allow leaves to have cardinality $2$ (see \Cref{fig:lop}). 
In \Cref{lopsec}, we present a bijective proof that $h(\tau)$ is $\gamma$-positive for lopsided octopuses and give an expression for its $\gamma$-coefficients.

\section{Combinatorial proof of \Cref{prop}}\label{proofsec}

Throughout this section, we let $k, n$ be integers such that $0 \leq k \leq n$. 
We use $\T_{n, k}$ to denote $\mathcal{Q}_{\tau_{n, k}} \cap \Z^n$. 
That is, $\T_{n, k}$ is the set of integer-valued tuples $x = (x_1, x_2, \ldots, x_n)$ such that
\begin{align*}
    &x_i \geq 0 \text{ for } i \in [n],\\
    &x_i \leq 1 \text{ for } i \in [k], \text{ and}\\
    &x_1 + x_2 + \cdots + x_n \leq n.
\end{align*}
To prove the result combinatorially, we view $h(\tau_{n, k})$ as the generating function of objects consisting of dots, numbers, and circles.

\begin{definition}
    The set $\O_{n, k}$ consists of all objects of the following form.
    \begin{itemize}
        \item There are $n$ dots in a row and they are labeled using the numbers $1, 2, \ldots, n$ in order. 
        The labels are written below the dots.
        \item Each dot can be circled using either a black or a red circle.
        \item All dots circled black must have a label in $[k]$.
        \item Some of the labels from $[k + 1, n]$ can be circled using a red circle and the number of such labels must be equal to the number of dots circled red.
    \end{itemize}
\end{definition}

When we draw an object in $\O_{n, k}$, we draw a dotted line to separate the labels in $[k]$ from those in $[k + 1, n]$ (see the example below). 
Hence, the only labels that can be circled (necessarily in red) are after the dotted line.

\begin{example}
    An object in $\O_{7, 4}$ is shown below.
    \begin{center}
        \begin{tikzpicture}[xscale = 1.25]
            \node at (1, 0) {$\bullet$};
            \node[draw = black, circle] at (2, 0) {$\bullet$};
            \node[draw = red, circle] at (3, 0) {$\bullet$};
            \node[draw = black, circle] at (4, 0) {$\bullet$};
            \node at (5, 0) {$\bullet$};
            \node[draw = red, circle] at (6, 0) {$\bullet$};
            \node at (7, 0) {$\bullet$};

            \foreach \x in {1,2,3,4,7}
            {
            \node at (\x, -0.85) {$\x$};
            }
            \node[draw = red, circle] at (5, -0.85) {$5$};
            \node[draw = red, circle] at (6, -0.85) {$6$};

            \draw[thick,dotted] (4.5, -1.25) -- (4.5, 0.75);
        \end{tikzpicture}
    \end{center}
\end{example}

We now exhibit a bijection between $\T_{n, k}$ and $\O_{n, k}$. 
Given a tuple $x \in \T_{n, k}$, we construct an object in $\O_{n, k}$ as follows.
\begin{itemize}
    \item For any $i \in [k]$, if $x_i = 1$, we circle in black the dot labeled $i$.
    \item For any $i \in [k + 1, n]$, if $x_i \neq 0$, we circle in red the \emph{label} $i$. 
    If $i_1$ is the smallest such index and $x_{i_1} = k_1$, then we circle in red the $k_1^{th}$ uncircled dot (read from left to right). 
    If $i_2$ is the second smallest such index and $x_{i_2} = k_2$, then we circle in red the $k_2^{th}$ uncircled dot after the last dot circled red. 
    We continue this process for all such indices.
\end{itemize}
It can be verified that this is a bijection.

\begin{example}\label{bijecex}
    For $n = 9$ and $k = 4$, we have $x = (0,1,1,0,1,0,3,3,0) \in \T_{9, 4}$. 
    The corresponding object $O$ in $\O_{9, 4}$ is shown below.
    \begin{center}
        \begin{tikzpicture}[xscale = 1.25]
            \node[draw = red, circle] at (1, 0) {$\bullet$};
            \node[draw = black, circle] at (2, 0) {$\bullet$};
            \node[draw = black, circle] at (3, 0) {$\bullet$};
            \node at (4, 0) {$\bullet$};
            \node at (5, 0) {$\bullet$};
            \node[draw = red, circle] at (6, 0) {$\bullet$};
            \node at (7, 0) {$\bullet$};
            \node at (8, 0) {$\bullet$};
            \node[draw = red, circle] at (9, 0) {$\bullet$};

            \foreach \x in {1,2,3,4,6,9}
            {
            \node at (\x, -0.85) {$\x$};
            }
            \node[draw = red, circle] at (5, -0.85) {$5$};
            \node[draw = red, circle] at (7, -0.85) {$7$};
            \node[draw = red, circle] at (8, -0.85) {$8$};

            \draw[thick,dotted] (4.5, -1.25) -- (4.5, 0.75);
        \end{tikzpicture}
    \end{center}
\end{example}

It might be easier to view this bijection by converting $x$ into a partition $y$ via its partial sums. 
After doing this, the bijection can be viewed as using the objects in $\O_{n, k}$ to record the coordinates of the corner boxes of the Young diagram of $y$. 
This idea is elucidated in the following example.

\begin{example}\label{youngex}
    For $n, k, x, O$ as in \Cref{bijecex}, the corresponding partition of partial sums is $y = (0,1,2,2,3,3,6,9,9)$. 
    The Young diagram of this partition in the $n \times n$ grid is shown in \Cref{fig:young}. 
    To specify the coordinates of the corner boxes, note that in the first $k$ rows (corner boxes marked with a black dot) we only need to specify the column number. 
    These are specified in $O$ by the dots circled black.
    For the remaining corner boxes (marked with red dots), the column numbers are specified with the labels circled red in $O$. 
    The row numbers are specified by the dots circled red; since there are two dots circled in black, we represent the columns $3, 4, \ldots, 9$ by the remaining dots from left to right.
\end{example}

\begin{figure}[ht]
    \centering
    \begin{tikzpicture}[scale = 0.85]
        \draw[fill, cyan!10] (0, 9) rectangle (9, 7);
        \draw[fill, cyan!10] (0, 7) rectangle (6, 6);
        \draw[fill, cyan!10] (0, 6) rectangle (3, 4);
        \draw[fill, cyan!10] (0, 4) rectangle (2, 2);
        \draw[fill, cyan!10] (0,2) rectangle (1, 1);
        
        \draw[thin, dotted] (0, 0) grid (9, 9);
        
        \foreach \x in {1,2,4,5,7,8}
        {
        \node at (\x - 0.5, -0.75) {$\x$};
        }
        \foreach \x in {3,6,9}
        {
        \node[draw = red, circle] at (\x - 0.5, -0.75) {$\x$};
        }

        \foreach \x in {1,4,6,9}
        {
        \node at (-0.75, \x - 0.5) {$\x$};
        }
        \foreach \x in {2,3}
        {
        \node[draw, circle] at (-0.75, \x - 0.5) {$\x$};
        }
        \foreach \x in {5,7,8}
        {
        \node[draw = red, circle] at (-0.75, \x - 0.5) {$\x$};
        }
        
        \draw[thick] (0, 0) -- (0, 1) -- (1, 1) -- (1, 2) -- (2, 2) -- (2, 4) -- (3, 4) -- (3, 6) -- (6, 6) -- (6, 7) -- (9, 7) -- (9, 9);

        \node at (1 - 0.5, 2 - 0.5) {$\bullet$};
        \node at (2 - 0.5, 3 - 0.5) {$\bullet$};
        \node at (3 - 0.5, 5 - 0.5) {\color{red} $\bullet$};
        \node at (6 - 0.5, 7 - 0.5) {\color{red} $\bullet$};
        \node at (9 - 0.5, 8 - 0.5) {\color{red} $\bullet$};
    \end{tikzpicture}
    \caption{Young diagram of the partition in \Cref{youngex}.}
    \label{fig:young}
\end{figure}

Under the bijection presented above, the number of nonzero entries in the tuple $x \in \T_{n, k}$ is the number of circled dots in the corresponding object in $\O_{n, k}$. 
We now use a similar idea to \cite[Proposition 2.2]{comp} to prove the result.

We say that an object in $\O_{n, k}$ is \emph{primitive} if
\begin{itemize}
    \item there are no dots circled black and
    \item there is no $i \in [k + 1, n]$ such that both the label $i$ as well as its corresponding dot are circled red.
\end{itemize}
To any object in $\O_{n, k}$, we can associate a primitive object by removing all black circles and if there are two red circles on the same vertical line, remove both red circles.

\begin{example}
    The primitive object associated to the object in $\O_{8, 3}$ given by
    \begin{center}
        \begin{tikzpicture}[xscale = 1.25]
            \node at (1, 0) {$\bullet$};
            \node[draw = red, circle] at (2, 0) {$\bullet$};
            \node[draw = black, circle] at (3, 0) {$\bullet$};
            \node at (4, 0) {$\bullet$};
            \node at (5, 0) {$\bullet$};
            \node[draw = red, circle] at (6, 0) {$\bullet$};
            \node[draw = red, circle] at (7, 0) {$\bullet$};
            \node at (8, 0) {$\bullet$};

            \foreach \x in {1,2,3,4,7}
            {
            \node at (\x, -0.85) {$\x$};
            }
            \node[draw = red, circle] at (5, -0.85) {$5$};
            \node[draw = red, circle] at (6, -0.85) {$6$};
            \node[draw = red, circle] at (8, -0.85) {$8$};

            \draw[thick,dotted] (3.5, -1.25) -- (3.5, 0.75);
        \end{tikzpicture}
    \end{center}
    is the one shown below.
    \begin{center}
        \begin{tikzpicture}[xscale = 1.25]
            \node at (1, 0) {$\bullet$};
            \node[draw = red, circle] at (2, 0) {$\bullet$};
            \node at (3, 0) {$\bullet$};
            \node at (4, 0) {$\bullet$};
            \node at (5, 0) {$\bullet$};
            \node at (6, 0) {$\bullet$};
            \node[draw = red, circle] at (7, 0) {$\bullet$};
            \node at (8, 0) {$\bullet$};

            \foreach \x in {1,2,3,4,7}
            {
            \node at (\x, -0.85) {$\x$};
            }
            \node[draw = red, circle] at (5, -0.85) {$5$};
            \node at (6, -0.85) {$6$};
            \node[draw = red, circle] at (8, -0.85) {$8$};

            \draw[thick,dotted] (3.5, -1.25) -- (3.5, 0.75);
        \end{tikzpicture}
    \end{center}
\end{example}

We can construct all objects in $\O_{n, k}$ that are associated to a given primitive object $O$ as follows. 
Suppose that in $O$, the labels of the dots that are circled red are $S = \{s_1, \ldots, s_j\}$ and the labels that are circled red are $T = \{t_1, \ldots, t_j\}$. 
By definition $S \cap T = \varnothing$. 
For any $i \in [n] \setminus (S \cup T)$, we can modify $O$ by either
\begin{itemize}
    \item circling in black the dot labeled $i$, if $i \in [k]$, or
    \item circling in red both the dot labeled $i$ as well as the label $i$, if $i \in [k + 1, n]$.
\end{itemize}
Each such modification increases the number of circled dots by $1$ and there are precisely $n - 2j$ such modifications possible. 
It is not difficult to see that the objects in $\O_{n, k}$ that are associated to the primitive object $O$ are precisely those built by modifying $O$ using the operations mentioned above.

This gives us the required result since the primitive objects in $\O_{n, k}$ with $j$ circled dots can be constructed by
\begin{itemize}
    \item first choosing the labels to be circled red, which can be done in $\binom{n - k}{j}$ ways, and then
    \item choosing the dots to be circled red. 
    Since the object must be primitive, there are $\binom{n - j}{j}$ ways to choose these dots.
\end{itemize}
This gives us the required expression
\begin{equation*}
    h(\tau_{n, k}, t) = \sum_{j = 0}^{\lfloor n/2 \rfloor} \binom{n - k}{j} \binom{n - j}{j} t^j(1 + t)^{n - 2j}.
\end{equation*}

\begin{remark}
    The same idea can be used to prove $\gamma$-positivity of the corresponding polynomial for the generalized polytopes $\mathcal{Q}_{n, d, k}$ studied in \cite[Section 5]{oct}. 
    We would consider objects where there are $n + d$ dots but only the first $n$ are labeled. 
    The bijection and proof of $\gamma$-positivity then follows just as presented above.
\end{remark}

\section{Lopsided octopuses}\label{lopsec}

Let $n, k_1, k_2$ be nonnegative integers such that $k = k_1 + 2k_2 \leq n$. 
The lopsided octopus $\tau_{n, k_1, k_2}$ is the arbor with root $[k + 1, n]$ and singletons $\{i\}$ for $i \in [k_1]$ and doubletons $\{i, i + 1\}$ for $i \in \{k_1 + 1, k_1 + 3, \ldots, k_1 + 2k_2 - 1\}$ as leaves attached to the root.\footnote[1]{One can imagine an \href{https://ansatsukyoshitsu.fandom.com/wiki/Korosensei}{\color{black}octopus} that skips $k_1$ legs during leg day.} 
Taking $k_2 = 0$ gives usual octopuses.

\begin{example}
    The lopsided octopus $\tau_{10, 3, 2}$ is shown in \Cref{fig:lop}.
\end{example}

\begin{figure}[ht]
    \centering
    \begin{tikzpicture}[scale = 2,yscale=1.25]
        \node[draw,circle] (r) at (2, 1) {$8, 9, 10$};
        \node[draw,circle] (l1) at (0, 0) {$1$};
        \node[draw,circle] (l2) at (1, 0) {$2$};
        \node[draw,circle] (l3) at (2, 0) {$3$};
        \node[draw,circle] (l4) at (3, 0) {$4, 5$};
        \node[draw,circle] (l5) at (4, 0) {$6, 7$};

        \draw (l1) -- (r) -- (l2);
        \draw (l3) -- (r) -- (l4);
        \draw (r) -- (l5);
    \end{tikzpicture}
    \caption{The lopsided octopus $\tau_{10,3,2}$.}
    \label{fig:lop}
\end{figure}

We extend the proof in the previous section to show that $h(\tau_{n, k_1, k_2})$ is $\gamma$-positive. 
Just as before, we use $\T_{n, k_1, k_2}$ to denote $\mathcal{Q}_{\tau_{n, k_1, k_2}} \cap \Z^n$. 
That is, $\T_{n, k_1, k_2}$ consists of the subset of $\T_{n, k_1}$ that also satisfies
\begin{equation*}
    x_i + x_{i + 1} \leq 2 \text{ for } i \in \{k_1 + 1, k_1 + 3, \ldots, k_1 + 2k_2 - 1\}.
\end{equation*}
To construct the objects $\O_{n, k_1, k_2}$, we use almost the same method as in the previous section, except when $x_i = 2$ for some $i \in [k_1 + 1, k_1 + 2k_2]$. 
In such a case, if $i \in \{k_1 + 2j - 1, k_1 + 2j\}$, then, as before, we circle in black the dot labeled $i$, but we also enclose the dots corresponding to $k_1 + 2j - 1$ and $k_1 + 2j$ in a box. 
Note that a box can have at most one circle inside it. 
This will be elucidated in the following example.

\begin{example}
    For $n = 10, k_1 = 3, k_2 = 2,$ and $x = (1,0,0,2,0,1,0,2,0,3) \in \T_{10, 3, 2}$, the corresponding object in $\O_{n, k_1, k_2}$ is shown below. 
    Note that the uncircled dot inside the box is ignored when we start placing red circles around dots.
    \begin{center}
        \begin{tikzpicture}[xscale = 1.25]
            \foreach \x in {2,5,7,8,10}
            {
            \node at (\x, 0) {$\bullet$};
            }
            \foreach \x in {1,4,6}
            {
            \node[draw, circle] at (\x, 0) {$\bullet$};
            }
            \foreach \x in {3,9}
            {
            \node[draw = red, circle] at (\x, 0) {$\bullet$};
            }

            \foreach \x in {1,2,3,4,5,6,7,9}
            {
            \node at (\x, -0.85) {$\x$};
            }
            \foreach \x in {8,10}
            {
            \node[draw = red, circle] at (\x, -0.85) {$\x$};
            }

            \draw (3.65, 0.5) rectangle (5.35, -0.5);

            \draw[dotted] (3.5, -1.25) -- (3.5, 0.75);
            \draw[dotted] (5.5, -1.25) -- (5.5, 0.75);
            \draw[dotted] (7.5, -1.25) -- (7.5, 0.75);
        \end{tikzpicture}
    \end{center}
    A dotted line is drawn after $k_1$ and after the pair $\{k_1 + 2j- 1, k_1 + 2j\}$ for each $j \in [k_2]$.
\end{example}

Examining this bijection gives the following definition for the objects $\O_{n, k_1, k_2}$.

\begin{definition}
    The set $\O_{n, k_1, k_2}$ consists of the objects in $\O_{n, k_1 + 2k_2}$ where we are allowed to place boxes as follows: 
    A box can be put around the dots labeled by $k_1 + 2j - 1$ and $k_1 + 2j$ for some $j \in [k_2]$ if exactly one of these dots is circled black, and the other is uncircled.
\end{definition}

We say that an object in $\O_{n, k_1, k_2}$ is \emph{primitive} if it has no black circle that is outside a box, and it has no dot circled red whose label is also circled red. 
Just as before, these primitive objects will correspond to the $\gamma$-coefficients. 
To any object in $\O_{n, k_1, k_2}$, we can associate a primitive object by deleting all black circles that are not in a box, as well as any pair of red circles that are on the same vertical line. 
And we can reverse these operations to construct all objects that are associated to a given primitive object.

Using the same ideas as in the previous section, setting $\tau = \tau_{n, k_1, k_2}$, this gives us
\begin{equation*}
    h(\tau, t) = \sum_{j = 0}^{\lfloor n/2 \rfloor} \gamma_j(\tau) t^j(1 + t)^{n - 2j}
\end{equation*}
where $\gamma_j(\tau)$ is the number of primitive objects in $\O_{n, k_1, k_2}$ that have $j$ dots that are circled. 
Counting such primitive objects gives us the following result.

\begin{proposition}
    Let $n, k_1, k_2$ be such that $k = k_1 + 2k_2 \leq n$ and $\tau = \tau_{n, k_1, k_2}$. 
    The polynomial $h(\tau)$ is $\gamma$-positive and for any $j$, we have
    \begin{equation*}
        \gamma_j(\tau) = \sum_{i = 0}^{k_2} 2^i\binom{k_2}{i} \binom{n - k}{j - i}\binom{n - j - i}{j - i}.
    \end{equation*}
\end{proposition}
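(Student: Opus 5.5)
Given the discussion preceding the statement, the plan reduces to two tasks: confirm that the map $\T_{n,k_1,k_2} \to \O_{n,k_1,k_2}$ is a weight-preserving bijection, and count the primitive objects with $j$ circled dots.

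First, the bijection. I would check it restricts correctly from the bijection $\T_{n,k} \to \O_{n,k}$ of \Cref{proofsec} with $k = k_1+2k_2$. In a doubleton block $\{k_1+2m-1, k_1+2m\}$, the pair $(x_{k_1+2m-1}, x_{k_1+2m})$ can only be $(0,0)$, $(1,0)$, $(0,1)$, $(1,1)$, $(2,0)$ or $(0,2)$. The first four are recorded exactly as in $\O_{n,k}$ by black circles. The last two are recorded as a black circle on the nonzero coordinate together with a box, and the box makes the remaining uncircled dot invisible to the red placement. I must verify that the red-circle placement procedure is still invertible. This holds because boxed uncircled dots are skipped in the same way as circled dots, so the count of "available" dots is still $n$ minus the number of nonzero root-independent contributions.

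The part I expect to need the most care is checking that $x_1+\cdots+x_n \le n$ matches exactly the condition that enough uncircled, unboxed dots remain. Via partial sums, a value $2$ contributes $2$ to the sum and uses up two dots, namely the circled one and the boxed one. So the Young-diagram picture of \Cref{youngex} still applies, with the box accounting for the extra unit. The number of nonzero coordinates equals the number of circled dots, because boxes add no circles.

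Second, the decomposition. Deleting the unboxed black circles and the vertically aligned red pairs gives a primitive object $P$ with $j$ circled dots. Here I would define $j$ as the number of boxes plus the number of red dots; since each primitive red dot has a matching circled label, this coincides with "$j$ circled dots". The admissible re-insertions are:
\begin{itemize}
\item a black circle at a label $i \in [k]$ whose dot is uncircled and unboxed;
\item a red dot together with its label, at a label $i \in [k+1,n]$ whose dot and label are both free.
\end{itemize}
Suppose $P$ has $i$ boxes and $j-i$ red pairs. The boxes occupy $2i$ positions and the red dots and labels occupy $2(j-i)$ positions. The non-free positions are disjoint, so $n-2j$ positions remain, each independently toggled. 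This yields the factor $t^j(1+t)^{n-2j}$.

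Third, counting primitive objects with $i$ boxes and $j$ circles:
\begin{itemize}
\item Choose which $i$ of the $k_2$ doubletons carry a box, in $\binom{k_2}{i}$ ways, and which dot inside each box is black, in $2^i$ ways.
\item Choose the $j-i$ red labels from $[k+1,n]$, in $\binom{n-k}{j-i}$ ways.
\item Choose the $j-i$ red dots among the dots not in a box and not at a red label. There are $n-2i-(j-i) = n-j-i$ such dots, giving $\binom{n-j-i}{j-i}$ ways.
\end{itemize}
For the last bullet I must confirm that every such choice of dots is realized by some valid tuple, i.e. that the ordering constraint in the bijection imposes nothing further. The argument is that red dots are matched to red labels in increasing order, and any set of positions yields positive gaps. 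Summing over $i$ gives the stated formula. Nonnegativity of every term gives $\gamma$-positivity.
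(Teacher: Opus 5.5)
Your proposal is correct and follows the paper's proof. Like the paper, you count primitive objects in $\O_{n,k_1,k_2}$ by the number $i$ of boxes, which gives $2^i\binom{k_2}{i}\binom{n-k}{j-i}\binom{n-j-i}{j-i}$. Your extra checks (that the boxed bijection respects $x_1+\cdots+x_n\le n$, that the $2j$ occupied positions are disjoint, and that red dot sets are unconstrained) only spell out what the paper leaves to ``the same ideas as in the previous section.''
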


\begin{proof}
    As mentioned above, $\gamma_j(\tau)$ is the number of primitive objects in $\O_{n, k_1, k_2}$ that have exactly $j$ dots that are circled. 
    The expression above is split based on the number $i$ of boxes in such an object. 
    There are $2^i\binom{k_2}{i}$ ways to place $i$ boxes and choose which dot inside to circle. 
    Once this is done, we have to choose $j - i$ labels from $[k + 1, n]$ as well as $j - i$ dots (outside boxes) to circle red. 
    Since the object must be primitive, there are $\binom{n - k}{j - i}\binom{n - (j - i + 2i)}{j - i}$ ways to do this.
\end{proof}

\section*{Acknowledgements}

The author is supported by the Göran Gustafsson Foundation and the Verg Foundation. 
The author thanks \rotatebox{2.75}{Lopezno} Vecchi for helpful distractions.

\bibliographystyle{abbrv}
\bibliography{refs}

\end{document}